\newtheorem{theorem}{Theorem}
\newtheorem{lemma}{Lemma}
\newtheorem{proposition}{Proposition}
\newtheorem*{observation}{Observation}
\theoremstyle{definition}
\begin{document}

\title{Partition and sum is fast}
\date{\empty}
\author{Steve Butler \and Ron Graham \and Richard Stong}
\author{
Steve Butler\thanks{Department of Mathematics, Iowa State University, Ames, IA 50011, USA
(butler@iastate.edu).}
\and
Ron Graham\thanks{Department of Computer Science and Engineering, UC San Diego, La Jolla, CA 92093, USA (graham@ucsd.edu).}
\and
Richard Stong\thanks{Center for Communications Research, La Jolla, CA 92121, USA (stong@ccrwest.org)}}

\maketitle

\begin{abstract}
We consider the following ``partition and sum'' operation on a natural number:  Treating the number as a long string of digits insert several plus signs in between some of the digits and carry out the indicated sum.  This results in a smaller number and repeated application can always reduce the number to a single digit.  We show that surprisingly few iterations of this operation are needed to get down to a single digit.
\end{abstract}

\section{Introduction}
Consider the following operation, we call \emph{partition and sum}, that can be performed on a natural number:  

\begin{quote}
\emph{Treating the number as a long string of digits insert plus signs, ``${+}$'', in between some of the digits (as many or as few as desired) and then carry out the indicated sum to produce a new number.}
\end{quote}

This operation can be done for a number in any base (where we use the same base for the entire process), if there is a possibility of confusion we will indicate which base a number is written in using subscript notation, i.e., $111_{(2)}$ is $7$ in base $2$.  The operation in base $2$ was originally suggested by Gregory Galperin (see \cite{MSRI}) who asked for a bound on how many steps it takes to get a number down to $1$ (as long as one plus sign is inserted the number will decrease and so we always can get to $1$).

We encourage the reader at this point to put the paper aside, write down some random binary numbers, and try to use the partition and sum operation to get to $1$ in a few steps.

One approach that seems to work well (and that you might have tried) is to simply insert all possible plus signs, i.e., sum the digits.  This takes a number $n$ and gets it to something on the order of at most $\log n$.  So then we only need to apply enough iterations so that when we iteratively apply the logarithm to $n$, i.e., $\log(\log(\cdots(\log(n))\cdots))$ the value is less than $1$.  This is known as $\log^*(n)$ and grows \emph{amazingly} slowly with $n$, i.e., goes to infinity slower than just about any function we would expect to encounter.  But it still goes to infinity.

While the simple sum the digits strategy gives a good bound, it is still \emph{far} from the truth!



\begin{theorem}\label{thm:base}
In base $b=2$ we can take any natural number to a single digit in at most \emph{two} applications of partition and sum.  In base $b\ge3$ we can take any natural number to a single digit in at most \emph{three} applications of partition and sum.
\end{theorem}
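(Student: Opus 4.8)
The plan is to reduce Theorem~\ref{thm:base} to a purely combinatorial statement about what a \emph{single} partition-and-sum step can accomplish, and then establish that statement by an explicit construction whose entire point is to exploit carrying in addition.

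For $b=2$ the reduction is clean. A single digit is $0$ or $1$, and a number $m\ge2$ reaches a single digit in \emph{one} further step exactly when it has a partition summing to $\le1$; checking the few possibilities, this happens iff $m$ is a power of $2$ (partition $1+0+\cdots+0$). Hence the $b=2$ assertion is equivalent to: every natural number $\ge2$ admits a binary partition whose sum is a power of $2$. Indeed, one step then lands on some $2^{p}$, and a second step (when $p\ge1$) lands on $1$. For $b\ge3$ the target is softer. If some partition of $n$ reaches a number $m$ with base-$b$ digit sum at most $b-1$, then summing all of $m$'s digits finishes in one more step — two steps in all; and if some partition reaches $m=b^{t}-1=\underbrace{(b{-}1)\cdots(b{-}1)}_{t}$, then the partition $\underbrace{(b{-}1)\cdots(b{-}1)}_{t-1}+(b{-}1)$ produces $b^{t-1}+(b-2)$, whose digit sum is exactly $b-1$, so this finishes in one further step — three steps in all. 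So for $b\ge3$ it suffices to show that every natural number admits a base-$b$ partition whose sum is $b^{t}-1$ for some $t$, or has digit sum at most $b-1$ (a power of $b$ is fine too).

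The real content is the constructive proof of these reachability lemmas, and the governing phenomenon is the one already visible in $\underbrace{1\cdots1}_{k-1}+1=2^{k-1}$: an addition with many carries collapses the digit sum in a single stroke, which is precisely why one step is so powerful. I would argue by: (i) normalizing $n$ by deleting trailing zeros, which can be re-appended as a final all-zero block and change nothing; (ii) choosing, according to the digit pattern at the high and low ends of $n$, a bounded number of large blocks whose sum carries up to the desired shape — typically a single block swallowing a run of high-order digits together with a carefully sized low block; and (iii) reducing a general digit pattern to a shorter one by peeling off a bounded prefix and inducting on the number of digits.

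The main obstacle is that the naive induction — ``peel the last digit and recurse'' — does not close: appending one digit can knock the running sum out of any fixed neighborhood of a power of $b$. The fix is to prove a stronger statement in which the admissible target is a whole small \emph{band} of values around a power of $b$ (respectively around $b^{t}-1$), a band engineered to be stable under the peeling operations, and then to check that every residual digit configuration of $n$ is covered by the resulting case analysis. Getting this strengthened statement and its case split exactly right is the crux; granting the reachability lemmas, the stated $2$- and $3$-step bounds follow from the short reductions above.
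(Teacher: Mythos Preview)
Your reductions are correct as far as they go: for $b=2$ the theorem is indeed equivalent to ``every $n\ge2$ has a binary partition summing to a power of $2$,'' and for $b\ge3$ reaching either $b^{t}-1$ or a number with digit sum at most $b-1$ in one step would finish in three. But the proposal stops exactly where the work begins: the reachability lemmas are only sketched, and for $b\ge3$ the one you state is \emph{false}. Take $b=4$ and $n=223_{(4)}=43$. Its four partitions sum to $7$, $13$, $13$, $43$; none is of the form $4^{t}-1$, none is a power of $4$, and their base-$4$ digit sums are $4,4,4,7$, all exceeding $b-1=3$. (This is precisely the number $2(b{-}2)(b{-}1)_{(b)}$ that the paper singles out as genuinely needing three steps for every $b\ge4$.) So your target set is too small, and no induction aimed at it can close; the ``band'' you allude to would have to be chosen differently and you have not said how.

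The paper's mechanism is different in kind. Rather than peeling digits, it views a single step as a \emph{range} of achievable sums: start from the all-singletons sum $m$ and merge adjacent blocks to increase the total, controlling how much each merge adds. For $b=2$ a ``triple merging'' strategy (merging $1{+}{*}{+}{*}$ into $1{*}{*}$ raises the sum by $3$ or $4$; pair merges $1{+}{*}\to1{*}$ raise it by $1$) pushes $m$ up to the next power of $2$, with a short case check for small $m$. For $b\ge4$ the paper does not aim at $b^{t}-1$ at all: when $m\ge b^{2}$ it merges pairs one at a time so that consecutive achievable totals differ by at most $(b-1)^{2}$, and since the fully paired total exceeds $bm/2$ this sequence must cross an endpoint $b^{t+1}$ or $2b^{t}$, landing within $(b-1)^{2}$ above it; the resulting number has shape $c0\cdots0de$ with $c\le2$ and small $de$, and two more digit-sum steps finish. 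Small $m$ is handled by a separate elementary lemma, and base $3$ is treated on its own (with computer assistance). Your ``band around a power of $b$'' intuition is in the right spirit, but the device that actually lands in such a band is this bounded-gap sweep of merged sums, not a digit-peeling induction.
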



\section{Reducing in base $2$}
For the result in base $2$ it suffices to show that in one application of partition and sum we can get to a power of two (i.e., $100\ldots 0_{(2)}$). We do this by changing the way we think about the operation.  Namely we start by inserting all possible plus signs between digits and then removing some of them to \emph{merge} digits together forming a larger number and increasing the sum (so now we are doing ``merging and sum'').  We will make use of the following observation (here ``${*}$'' indicates an unknown digit).

\begin{observation}
Merging $1{+}{*}$ to $1{*}_{(2)}$ increases the sum by $1$; merging $1{+}0{+}{*}$ to $10{*}_{(2)}$ increases the sum by $3$; merging $1{+}1{+}{*}$ to $11{*}_{(2)}$ increases the sum by $4$. 
\end{observation}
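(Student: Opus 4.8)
The plan is to prove the Observation by a direct arithmetic check of the three configurations, after first isolating the only conceptual point: that removing a single plus sign changes the value of the whole expression by a purely \emph{local} amount.

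To see this, note that removing one plus sign merges two adjacent blocks into a single block and leaves every other block of the expression --- together with the position of every digit in it --- exactly as it was. Since the value of the expression is the sum of the values of its blocks, the change in the total equals the value of the new merged block minus the sum of the values of the two blocks it replaced. In all three cases of the Observation those two blocks are single digits, so the computation is entirely local and the unknown digit ${*}\in\{0,1\}$ can be carried along symbolically. Using that $1{*}_{(2)}=2+{*}$, $\,10{*}_{(2)}=4+{*}$, and $11{*}_{(2)}=6+{*}$, the three differences are
\begin{align*}
1{*}_{(2)}-(1+{*}) &= (2+{*})-(1+{*}) = 1,\\
10{*}_{(2)}-(1+0+{*}) &= (4+{*})-(1+{*}) = 3,\\
11{*}_{(2)}-(1+1+{*}) &= (6+{*})-(2+{*}) = 4.
\end{align*}
In each line the ${*}$ cancels, which is exactly why the stated increments do not depend on the unknown digit.

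There is no real obstacle here; the only step worth stating with care is the locality remark, since everything afterward is arithmetic. It is also the reason the Observation is worth recording: applying several of these merges at pairwise-disjoint places changes the running sum by the sum of the corresponding local increments ($1$, $3$, or $4$), so the ``merging and sum'' process can be analyzed one increment at a time.
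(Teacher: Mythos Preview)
Your verification is correct; the three arithmetic identities are exactly the content of the observation, and the paper itself offers no proof, treating it as self-evident. One small wording slip: your locality paragraph speaks of merging ``two adjacent blocks'' and says ``those two blocks are single digits,'' but in the second and third cases three single-digit blocks are being merged (two plus signs are removed); the computation you actually carry out handles this correctly, so only the sentence needs adjusting.
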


One important thing we will make use of when we  merge \emph {three} terms together is that we get significantly more value for our $1$'s.  In particular the total sum from the parts which were merged together in triples is at least $7/3$  of the number of individual $1$'s that were used (i.e., $1{+}1{+}{1}$ to $111_{(2)}$ changed that portion of the sum from $3$ to $7$ and the rest of the possibilities yield even better returns).  So by forming many triples we can efficiently increase the sum; this forms the basis of our strategy.

\begin{quote}
\textbf{Triple merging strategy:}  Given a number whose binary expansion has $m$ total $1$'s where $2^k<m\le 2^{k+1}$, insert plus signs between each pair of digits in the expansion and perform the following as long as the sum is $\le 2^{k+1}$:  Find the left-most $1$ that has not yet been merged and merge it with its two successors.

Finally, use merges of the form $1{*}$ with the remaining digits to make up the difference to get the sum to $2^{k+1}$.
\end{quote}

We note this strategy will \emph{fail} when $m=5$.  In this case if there is a $1{+}0{+}{*}$ anywhere then by merging the three terms together we get the sum to $8$ and we are done.  This leaves $111110_{(2)}$ and $11111_{(2)}$ which can be handled by $11_{(2)}{+}11_{(2)}{+}10_{(2)}$ and $1_{(2)}+1111_{(2)}$ respectively.

\begin{proposition}
The triple merging strategy works when $m\ne 5$.
\end{proposition}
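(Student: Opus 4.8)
The plan is to track a single quantity, the running sum, and show the triple-merging strategy always lands it exactly on $2^{k+1}$. Write $T=2^{k+1}$; once all plus signs are inserted the sum equals $m$, and since $T/2<m\le T$ we must raise it by $\Delta:=T-m<2^k$. Only two kinds of move occur: a triple merge, which by the Observation raises the sum by $3$ or $4$, and a $1{*}$ merge, which raises it by exactly $1$. The workhorse is the efficiency estimate already highlighted: a merged triple has value $v_i$ at least $\tfrac73$ times the number of $1$'s it absorbs (the four patterns $100,101,110,111$ give ratios $4,\tfrac52,3,\tfrac73$), and always $v_i\le7$; the inequality $\tfrac73>2$ is what makes everything work.

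First I would show the triple phase always halts with a small deficit $\delta:=T-s\in\{0,1,2,3\}$, where $s$ is the sum at that moment. The bound $s\le T$ holds throughout since we only perform a triple merge that keeps it. Suppose the phase halted with $s\le T-4$. Then no triple merge could have been available (an available one adds at most $4$ and would therefore have kept $s\le T$), which forces the left-most still-unmerged $1$ to lie among the last two digits; hence at least $m-2$ of the $1$'s have been packed into triples. Writing the sum as (number of unmerged $1$'s) plus (total value of the triples) and applying the $\tfrac73$ estimate gives $s\ge\tfrac73(m-2)\ge\tfrac73(2^k-1)$, which is incompatible with $s\le2^{k+1}-4$. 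On the other hand, if the phase halts because the left-most triple would overshoot, then $s$ plus a number in $\{3,4\}$ exceeds $T$, so again $s\ge T-3$. Either way $\delta\le3$.

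Next comes the finishing phase: at most $\delta\le3$ merges of the form $1{*}$ on the digits that are still unmerged. The point is that those digits are rich in $1$'s. If $u$ is the number of $1$'s consumed by triples, then the sum decomposes as $s=(m-u)+\sum v_i$, so $\sum v_i=s-m+u$; together with $\sum v_i\ge\tfrac73u$ this gives $u\le\tfrac34(s-m)\le\tfrac34\Delta<\tfrac34\cdot2^k$, so more than $\tfrac14\cdot2^k$ of the $1$'s remain unmerged. A block of digits containing $r$ ones always admits at least $\lfloor r/2\rfloor$ disjoint $1{*}$ merges --- the stingiest case being all $r$ ones sitting consecutively at the very end of the number --- so the strategy succeeds once $\lfloor r/2\rfloor\ge\delta$. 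When $\tfrac14\cdot2^k\ge6$, i.e. for every $m>32$, this is automatic. For the remaining small $m$, a failure would force the leftover to be exactly such a terminal block $1^r$ with $r\le2\delta-1\le5$; feeding $m=u+r>2^k$, $u\le\tfrac37(s-r)$ and $s=T-\delta$ into one another bounds $T$, hence $m$, from above, so only a short list of triples $(k,\delta,r)$ survives, and one checks these by hand. All are impossible except at $m=5$, where the genuine trouble spots are $11111_{(2)}$ and $111110_{(2)}$ --- which is exactly why $m=5$ is excluded.

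The step I expect to be the real obstacle is this last one. The triple phase is essentially forced and painless, but the finishing phase needs not just enough $1$'s but enough \emph{well-placed} ones, and the configuration in which every surviving $1$ sits consecutively at the tail of the number is precisely the one that can starve the top-up --- the reason $11111_{(2)}$ fails. Pushing the $\tfrac73$ efficiency estimate hard enough to corner that configuration into a finite check, rather than leaving an infinite family of potential exceptions, is where the argument has to be carried out carefully.
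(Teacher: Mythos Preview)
Your plan follows the same line as the paper's proof: exploit the $\tfrac73$-per-one efficiency of triples to bound both the residual deficit $\delta$ and the number $r$ of still-unmerged $1$'s, then top up with $1{*}$ merges. Two points are worth flagging.

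First, your bound on $r$ is looser than it needs to be. You pass from $u\le\tfrac34(s-m)\le\tfrac34(T-m)$ to $r=m-u>\tfrac14\,2^k$ by immediately replacing $m$ with the crude lower bound $2^k$. The paper keeps $m$ in the estimate: from $s\ge\tfrac73 q+(m-q)$ and $s\le T$ one gets $m-q\ge\tfrac74 m-\tfrac34 T$, and then the single inequality $\tfrac73(m-5)+5>2^{k+1}$ (equivalently $m>\tfrac67\,2^k+\tfrac{20}{7}$) forces $m-q\ge 6$. Combined with $m>2^k$ this already holds for every $m\ge 10$, so the finite residue shrinks to the trivially handled $m\in\{1,2,3,4,6,7,8\}$ (no triple is ever formed) together with a short four-line table for $m=9$. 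Your route leaves everything up to $m=32$ (or, after your second inequality, up to $m=16$) to be ``checked by hand'', and that check is not actually carried out; this is the genuine gap in the proposal, and it is exactly the place you yourself flag as the obstacle.

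Second, the sentence ``a failure would force the leftover to be exactly such a terminal block $1^r$'' is not right as written. The terminal block is merely the configuration that \emph{realises} the lower bound $\lfloor r/2\rfloor$ on the number of available $1{*}$ merges; failure only implies that the maximum number of disjoint $1{*}$ merges in the tail is $<\delta$, and hence (since that maximum is at least $\lfloor r/2\rfloor$) that $r\le 2\delta-1$. You use only this last inequality afterwards, so the slip is harmless, but the statement itself should be weakened.

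Your observation that the triple phase halts with $\delta\le 3$ for \emph{every} $m$ is correct and is a slightly cleaner way to organise the argument than the paper's, which recovers $\delta\le3$ only after first securing six leftover $1$'s. It just doesn't save you from the finite check.
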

\begin{proof}
If $\frac73(m-5) + 5> 2^{k+1}$ then this strategy must succeed.  To see this, suppose that $q$ is the number of $1$'s that have been merged into triples at some stage in the strategy.  Then the total sum is at least $\frac73q+(m-q)$.  We will stop just before we get above $2^{k+1}$ and the inequality indicates that when we stop we have at least \emph{six} $1$'s remaining (for otherwise our sum would be too large).  The only reason we would stop though is if we ran out of $1$'s (which we haven't), or the next merging of triples would put us over $2^{k+1}$.  Since merging triples increases the total by at most $4$ this means that we are now at most $3$ away.  So using the remaining $1$'s we can now form $1{*}$'s to make up the difference and get the sum to $2^{k+1}$.

Simplifying we can conclude that the strategy works for $m>\frac672^k+\frac{20}7$.  When we combine this with $m\ge2^k+1$, we can conclude that this holds for $m\ge10$.  On the other hand for $m=1,2,3,4,6,7,8$ we could never form a triple and so the forming twins portion of the strategy kicks in and this will always succeed.

That leaves us with $m=9$, and to finish this off we consider the possible triples that the strategy gives us before we have to move to forming twins.

\bigskip
\noindent\hfil
\begin{tabular}{|c|c|c|}\hline 
Starting triples&Remaining $1$'s&Remaining difference\\ \hline\hline
$11{*}$&$\ge6$&$3$\\\hline
$11{*}$~~~$10{*}$&$\ge 4$&$0$\\\hline
$10{*}$~~~$11{*}$&$\ge 4$&$0$\\\hline
$10{*}$~~~$10{*}$&$\ge 5$&$1$\\\hline
\end{tabular}

\bigskip

In each case the number of $1$'s that remain can readily be used to make up the difference.  This finishes the case for $m=9$ and also the proof.
\end{proof}

\section{Reducing in base $\ge 4$}
We start by showing that if $n$ is small then there is a simple strategy that works for partition and sum.

\begin{lemma}\label{lem:4a}
Let $b\ge 4$ be our base.  Then any number $n<3b^2-b-1$ can be collapsed to a single digit in at most two steps.
\end{lemma}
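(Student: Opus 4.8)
The plan is to first note that since $b\ge 4$ we have $n<3b^2-b-1<b^3$, so $n$ has at most three base-$b$ digits, and it suffices to treat one-, two-, and three-digit numbers. A one-digit $n$ needs no steps. For a two-digit $n=pb+q$ (with $1\le p\le b-1$, $0\le q\le b-1$) the only available move is to sum the two digits, giving $p+q\le 2b-2$; this is either already a single digit or equals $b+r$ with $0\le r\le b-2$, in which case a second digit-sum yields $1+r\le b-1$. So two-digit numbers collapse in at most two steps, and I would reuse this ``$\le 2b-2\Rightarrow$ one more step'' endgame throughout.

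For a three-digit $n=xb^2+yb+z$ the hypothesis forces the leading digit to be $x\in\{1,2\}$. The main step is to show that, with a single exception, simply summing all three digits already lands in the good regime $x+y+z\le 2b-2$, after which the endgame above finishes in one more step. When $x=1$ the inequality $x+y+z\le 2b-2$ fails only if $y=z=b-1$, i.e.\ $n=2b^2-1$. When $x=2$ the bound $n<3b^2-b-1$ forces $yb+z\le b^2-b-2$, which I would unwind to conclude first $y\le b-2$ and then $x+y+z\le 2b-2$ with no exception at all — the borderline configuration $y=b-2$, $z=b-1$ corresponds to exactly the excluded value $n=3b^2-b-1$.

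It remains to handle the lone exceptional value $n=2b^2-1$, whose base-$b$ digits are $1,b-1,b-1$. Here I would use a tailored partition rather than summing all digits: splitting off the leading $1$ gives $1+(b^2-1)=b^2$, and then summing the digits of $b^2$ (namely $1,0,0$) gives $1$. That is two steps, which completes the argument.

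The only place needing real care is the three-digit case with leading digit $2$: one must check that the strict inequality $n<3b^2-b-1$ is precisely what rules out the configuration $(y,z)=(b-2,b-1)$, for which summing all digits would produce $2b-1=b+(b-1)$, a two-digit number whose own digit sum is $b$, not a single digit. Everything else is a short sequence of arithmetic checks.
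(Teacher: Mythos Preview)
Your argument is correct and follows essentially the same route as the paper: both proofs show that summing all digits twice succeeds for every $n<3b^2-b-1$ except $n=1(b{-}1)(b{-}1)_{(b)}=2b^2-1$, and both handle that lone exception via the partition $1_{(b)}+(b{-}1)(b{-}1)_{(b)}=100_{(b)}$. Your version is more explicit in carrying out the digit-count and case analysis (in particular the verification that $x=2$ forces $y+z\le 2b-4$), whereas the paper simply asserts where the first and second failures of the digit-sum strategy occur.
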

\begin{proof}
First we observe that for $n=1,2,\ldots,2b-2$,  we can  apply the summing digits strategy to get to a single digit in one step. The first number for which the summing digits strategy fails to reach
a single digit in two steps is $1(b-1)(b-1)_{(b)}$. However, for this number we can first do $1_{(b)}+(b-1)(b-1)_{(b)}=100_{(b)}$ and then sum our digits as before.

The number $2(b-2)(b-1)_{(b)}=3b^2-b-1$ is the next time where the summing digits strategy fails, establishing the lemma.
\end{proof}

This lemma is tight; the number $2(b-2)(b-1)_{(b)}$ takes three steps as is easy to check.  In fact more is true.

\begin{observation}
Let $b\ge 4$ be our base.  Then $20\ldots0(b-2)(b-1)_{(b)}$ takes three steps for \emph{any} number of zeroes.  In particular, there are infinitely many numbers that take three steps.
\end{observation}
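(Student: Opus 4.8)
The plan is to show that $N = 20\cdots0(b-2)(b-1)_{(b)}$ requires at least three applications of partition and sum; the matching upper bound of three is immediate from Theorem~\ref{thm:base}. Two elementary facts about the operation carry the argument. The first: writing $s(m)$ for the base-$b$ digit sum of $m$, \emph{every} value obtainable from $m$ in one partition and sum is at least $s(m)$, since merging a run of digits into a single block never decreases the value it contributes (a block $d_1\cdots d_k$ contributes $\sum_i d_ib^{k-i}\ge\sum_i d_i$), so inserting every plus sign is the cheapest choice. Because $s(N) = 2+(b-2)+(b-1) = 2b-1 > b-1$, the number $N$ is not a single digit and cannot become one in a single step. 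Moreover, if one step sends $N$ to $M$, then $M\ge 2b-1$, so $M$ is not a single digit either, and $M$ collapses to a single digit in one further step exactly when $s(M)\le b-1$.

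The second fact: partition and sum preserves residues modulo $b-1$ (``casting out nines'': $b\equiv1$, so any number, its digit sum, and the sum of the values of any of its block decompositions are all congruent mod $b-1$). Thus every $M$ reachable from $N$ has $M\equiv N\equiv 2b-1\equiv1\pmod{b-1}$, whence $s(M)\equiv1\pmod{b-1}$. Together with the first paragraph, a two-step reduction of $N$ would need $s(M)\le b-1$ and $s(M)\equiv1\pmod{b-1}$ at once, forcing $s(M)=1$; that is, $M$ would have to be a power of $b$. So it suffices to show that \emph{no single partition and sum takes $N$ to a power of $b$}.

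To see this I would exploit the zero run to describe all partitions, splitting on the block $B_0$ containing the leading $2$. If $B_0$ is all of $N$, then $M=N=2b^{j+2}+b^2-b-1$, where $j$ is the number of zeros. If $B_0 = 20\cdots0$ with $r$ trailing zeros ($0\le r\le j$), the remaining suffix $0\cdots0(b-2)(b-1)$ contributes $b^2-b-1$ or $2b-3$ depending on whether its two nonzero digits are kept together or separated, so $M\in\{2b^r+b^2-b-1,\ 2b^r+2b-3\}$. If $B_0 = 20\cdots0(b-2)$, then $M = 2b^{j+1}+2b-3$. This exhausts the possibilities. When $r=0$ the two values $b^2-b+1$ and $2b-1$ lie strictly between the consecutive powers $b$ and $b^2$, so are not powers of $b$. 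Every other candidate has the form $2b^t+b^2-b-1$ or $2b^t+2b-3$ with $t\ge1$, hence is $\equiv -1$ or $-3\pmod b$; as $b\ge4$, neither residue is $0$, so the candidate is not divisible by $b$ and is not a power of $b$. This proves the claim, and since $j$ is arbitrary there are infinitely many three-step numbers.

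The one delicate step is making the block enumeration exhaustive: one has to confirm that, after accounting for the zero run, the three families above really capture every partition, and that all-zero blocks and leading zeros inside a block produce no extra case. The role of $b\ge4$ is precisely ``$-3\not\equiv0\pmod b$'': for $b=3$ one has $2012_{(3)}\to100_{(3)}\to1$ via $20_{(3)}+1_{(3)}+2_{(3)}=100_{(3)}$, a genuine two-step reduction, so the statement fails there and the congruence bookkeeping must be done with $b\ge4$ in force.
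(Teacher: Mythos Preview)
Your proof is correct and follows essentially the same route as the paper: reduce via the mod $(b-1)$ invariance to showing that no single step can take $N$ to a power of $b$, then rule this out by checking $M\bmod b$. The only cosmetic difference is that the paper computes $M\bmod b$ directly as the sum of the last digits of the blocks (giving the four values $b-1,\ (b-1)+(b-2),\ (b-1)+2,\ (b-1)+(b-2)+2$), whereas you enumerate partitions by the block containing the leading $2$ and then read off the residue; both arrive at residues in $\{-1,-3,1\}$, none of which vanish for $b\ge4$.
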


To see this we first note that the process is always invariant modulo $(b-1)$ (this is the same principle which states that a number is divisible by $9$ if and only if the sum of the digits is divisible by $9$).  So when we have taken this particular number to a single digit then we will end with $1$.  Now if it could be done in two steps we would have to be able to get it to a number of the form $10\ldots0_{(b)}$, i.e., a power of $b$.  In particular the last digit after the first step would have to be zero. However, for a number of this form the last digit in merging would come from $b-1$, $(b-1)+(b-2)$, $(b-1)+2$ or $(b-1)+(b-2)+2$ and none of these are $0$ modulo $b$ when $b\ge 4$.  (For $b=3$ we can get a $0$ in the last digit and so this number can be handled in two steps.)

Now we see that if the sum of digits is small then we can apply Lemma~\ref{lem:4a} after doing one step of summing the digits.  We next show that if the sum of digits is large then we can take one  step to get us to a number whose form can be finished in at most two more steps.

\begin{lemma}\label{lem:4b}
Let $b\ge 4$ be our base and let $n$ a number with the sum of its digits $m\ge b^2$.  Then in one step, $n$ can be collapsed to a number of the form $c0\ldots0de$ where $c\le 2$ and $de_{(b)}\le b^2-2b$.
\end{lemma}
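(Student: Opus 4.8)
The plan is to mimic the triple-merging idea from the base-$2$ case, but now with the goal of spending the large digit sum efficiently to land on a number of the prescribed shape $c0\ldots0de$ with $c\le2$ and $de_{(b)}\le b^2-2b$. As in Section~2, I would reformulate the operation as ``merging and sum'': start with all plus signs inserted (so the running sum is exactly $m\ge b^2$) and then remove some of them to merge adjacent digits into larger blocks, each merge increasing the sum. The target sum to hit is some value $T$ with $b^2\le T$ whose base-$b$ representation already has the desired form, e.g.\ something like $T = c\cdot b^{k}$ plus a small correction in the last two digits with $de_{(b)}\le b^2-2b$; the freedom in choosing $k$ and the last two digits gives a whole family of admissible targets, and I expect the key point to be that these admissible $T$ are dense enough (gaps of size at most $b^2-2b$, roughly) that we can always reach one.

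The main steps, in order: (1) Quantify how much a single merge can increase the sum: merging a digit $d$ (with $d\ge1$) with one successor raises the sum by roughly $(b-1)d$, and merging longer blocks gives even more, so merges are a ``coarse'' tool; twin-type merges with a leading $1$ raise the sum by about $b-1$ and are the ``fine'' tool. (2) Use coarse merges on the available nonzero digits to drive the sum up into the window $[b^2,\;b^2 + O(b^2)]$ near a chosen power-of-$b$-scaled target, stopping just before overshooting — exactly the stopping argument from the Proposition, where the inequality $m\ge b^2$ guarantees enough digits survive that we have not exhausted our supply. (3) Use the remaining leading-$1$ (twin) merges, each worth about $b-1$, to close the residual gap, which is at most the size of one coarse merge; since the residual is $O(b^2)$ and each fine merge contributes $\Theta(b)$, and $m\ge b^2$ ensures we still have $\Omega(b)$ ones available, we can fine-tune the sum to land exactly on an admissible $T$. (4) Check that the $T$ we hit, being of the form (leading part)$\cdot b^k + r$ with $r\le b^2-2b$ and leading part $\le 2$, is automatically of the claimed shape $c0\ldots0de$ — this is really just reading off the base-$b$ digits of $T$.

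The hard part will be step~(2)–(3) bookkeeping: making the ``coarse then fine'' two-phase argument go through uniformly in $b$, i.e.\ showing that after the coarse phase the residual gap is genuinely small relative to the total number of surviving $1$'s times $(b-1)$, so that the fine phase can always finish. This is the analogue of the $m\ne5$ annoyance in base $2$, and I would expect a handful of small-$b$ or small-configuration exceptions (numbers with very few nonzero digits other than a huge pile of $1$'s, or digit sums only barely above $b^2$) that have to be dispatched by hand, much as $11111_{(2)}$ and $111110_{(2)}$ were. A secondary subtlety is ensuring the leading block does not grow beyond $c\le2$: one must be careful to reserve the top digits and only merge in the interior, so that the high-order part of the final number stays a single small digit (or at most carries up to a $2$) rather than blowing up; keeping the coarse merges away from the most significant position handles this.
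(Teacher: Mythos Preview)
Your ``merge and sum'' framing is right, and your observation that the admissible targets (numbers of the form $c0\ldots0de$) are dense is the real key --- but you then abandon that density observation and try a two-phase coarse/fine scheme that has a genuine gap. The fine phase hinges on having many digits equal to $1$ so that twin merges increase the sum by exactly $b-1$; in base $b\ge4$ this is simply false. A number whose nonzero digits are all $b-1$ (say $(b-1)(b-1)\cdots(b-1)_{(b)}$) has $m\ge b^2$ once it is long enough but contains no $1$'s whatsoever, so your step~(3) has nothing to work with. More generally the smallest available twin-merge increment is $d_{\min}(b-1)$ where $d_{\min}$ is the least nonzero digit, and this can be as large as $(b-1)^2$, which is the same order as your ``coarse'' step --- so there is no genuine fine tool. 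Your last paragraph also misreads what $c\le2$ means: $c$ is the leading digit of the \emph{resulting sum}, not of the original number, so ``keeping merges away from the most significant position'' is irrelevant.

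The paper's argument avoids fine-tuning altogether. It fixes a single family of partitions interpolating between all-singletons (sum $m$) and all-pairs (sum $m+(b-1)A$, where $A$ is the larger of the odd- and even-indexed digit sums, excluding $a_0$), obtained by breaking the pairs one at a time. Consecutive sums in this family differ by at most $(b-1)^2$. Since $m$ lies in some interval $[b^t,2b^t)$ or $[2b^t,b^{t+1})$ and the all-pairs sum exceeds $bm/2$ (which, for $b\ge4$, forces it out of that interval), an intermediate-value step gives a partition whose sum $M$ lies in $[2b^t,\,2b^t+(b-1)^2)$ or $[b^{t+1},\,b^{t+1}+(b-1)^2)$. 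Any such $M$ automatically has base-$b$ expansion $c0\ldots0de$ with $c\in\{1,2\}$ and $de_{(b)}<(b-1)^2$, hence $de_{(b)}\le b^2-2b$. No exact target is ever aimed at; the bounded step size does all the work.
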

\begin{proof}
Let $n=(\ldots a_4a_3a_2a_1a_0)_{(b)}$ and let $$A=\max\{a_1+a_3+a_5+\cdots,a_2+a_4+a_6+\cdots\}.$$  We do not use the last digit for $A$ and so we have $A\ge (m-(b-1))/2$.  We now consider what happens if we merge in pairs  so that the leading digits in the pairs sum to $A$ (i.e., we pair so that all the digits are even or odd depending on which gave us $A$).  The sum total of this merging strategy will be
\[
(b-1)A+m\ge {(b-1)(m-b+1)\over2}+m ={mb+m-(b-1)^2\over2}> {mb\over2}.
\]
(The last step is by our assumption that $m\ge b^2$.)  If we now break these pairs one at a time, say from left to right, then the difference in the total would be at most $(b-1)^2$ at each pair.  Therefore we have a sequence of ways to partition which go from $m$ to $(b-1)A+m$ where the difference between two consecutive methods is at most $(b-1)^2$.

Now $m$ is in an interval of the form $[b^t,2b^t)$ or $[2b^t,b^{t+1})$ for some $t\ge 2$.  However we have that $(b-1)A+m>bm/2$ cannot be in the same interval (here we use that $b\ge 4$).  Therefore there will be some smallest merging strategy which will exceed the top of the given range containing $m$.  Let $M$ be the resulting total using this merging.  Then we either have $2b^t\le M<2b^t+(b-1)^2$ or $b^{t+1}\le M< b^{t+1}+(b-1)^2$, depending on which case we are in, which gives exactly the sort of base $b$ representation as given in the statement of the lemma.
\end{proof}

Now if the sum of digits $m<b^2$ then sum the singletons and  apply Lemma~\ref{lem:4a} to the result and we take at most three steps.  On the other hand, if the sum of digits $m\ge b^2$ then by Lemma~\ref{lem:4b} in one step  we will collapse to a number of the form $c0\ldots 0de$ with $c\le 2$ and $db+e\le b(b-2)$.  We have that $d+e\le (b-3)+(b-1)=2b-4$, and so adding all the singletons gives a number of size at most $2b-2$, which can be finished in one more step.

\section{Reducing in base $3$}
We can adopt the same ideas as we have seen before by first showing when the number is small then at most two steps of partition and sum suffice.  Then either the sum of the digits is small and so we first sum the digits and then apply a known two step technique \emph{or} the sum of digits is large meaning we have many digits to work with and so we have a lot of flexibility in merging that allows us in one step to get to a number which can readily be done in at most two steps using the sum of digits strategy.

The details of this are not enlightening and so we omit them here and refer interested readers to \cite{bgs} for details.  But what makes base $3$ interesting is that while there are numbers that require three steps, there are only eleven of them!

\begin{theorem}[{Butler-Graham-Stong \cite{bgs}}]
In base $3$ any natural number can be collapsed to a single digit in at most two applications of partition and sum \emph{except} for the following eleven:
\[
\begin{array}{r@{\,=\,}l@{\qquad\qquad\quad}r@{\,=\,}l}
1781 & 2102222_{(3)} & 41065 & 2002022221_{(3)}\\[3pt]
3239 & 11102222_{(3)} & 43981 & 2020022221_{(3)}\\[3pt]
3887 & 12022222_{(3)} & 98657 & 12000022222_{(3)}\\[3pt]
11177 & 120022222_{(3)} & 131461 & 20200022221_{(3)}\\[3pt]
14821 & 202022221_{(3)} & 393901 & 202000022221_{(3)}\\[3pt]
33047 & 1200022222_{(3)}
\end{array}
\]
\end{theorem}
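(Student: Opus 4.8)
The plan is to follow the three-layer structure already used for base $b\ge4$ above — handle small numbers by hand, then split according to the digit sum — but the bound of \emph{two} steps removes the slack we had there, so every number must be steered onto a very restricted target in a single move. The starting point is a reformulation: writing $S(n)$ for the base-$3$ digit sum, $n$ collapses to a single digit in at most two steps precisely when $n$ has a partition whose sum $n'$ satisfies $S(n')\le2$, i.e.\ $n'$ is a sum of at most two powers of $3$; call such an $n'$ a \emph{target}. Since partition and sum is invariant modulo $b-1=2$, every partition-sum of $n$ is congruent to $n$ mod $2$, and among targets the odd ones are exactly the powers of $3$ (each $2\cdot3^i$ and each $3^i+3^j$ with $i\ne j$ is even). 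All eleven numbers listed are odd (their digit sums are $11$ or $13$), so showing that one of them needs three steps reduces to checking that no partition of it sums to a power of $3$ — and since partition-sums never fall below $S(n)$, only the finitely many powers $3^k$ with $S(n)\le 3^k\le n$ are in play. For each of the eleven this is a finite verification (the largest has $12$ digits), made easier by the observation that all eleven have the same shape — a sparse high part followed by a long run of $2$'s — which is exactly what puts the relevant powers of $3$ out of reach.

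For the small-number layer, the analogue of Lemma~\ref{lem:4a}: note that summing all the digits of $n$ is already a legal first step whenever $S(n)$ is itself a target (the first non-target digit sums are $5,7,8,11,13,\dots$), so those cases are immediate. For the rest one runs a bounded case analysis over all $n$ below an explicit threshold comfortably larger than $393901$: for each such $n$ one either exhibits a partition onto a target or certifies that none exists. This finite computation is where the eleven exceptions are located, and it simultaneously certifies that every other $n$ below the threshold is a two-step number.

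For the large-number layer, the analogue of Lemma~\ref{lem:4b}: if $n$ lies above the threshold and $S(n)=m$ is not a target, then $m$ is large and $n$ has many digits, giving wide latitude in choosing which adjacent digits to merge. As in Lemma~\ref{lem:4b}, pairing digits of the same parity position pushes the total past the top of whichever interval $[3^k,2\cdot3^k)$ or $[2\cdot3^k,3^{k+1})$ contains $m$; the new difficulty is that, with no third step to spend, we must land the total \emph{exactly} on a target rather than merely inside a short window. Here we use that merging a $1$ with the digit immediately to its right changes the total by exactly $2$: once the coarse merges have brought us close, we can fine-tune in steps of $2$ and land on the nose — on a power of $3$ when $n$ is odd, on $2\cdot3^k$ when $n$ is even — using the $1$'s whose presence the parity of $m$ guarantees.

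The main obstacle is precisely this exact-landing requirement, together with fixing the crossover threshold so that the finite and infinite halves of the argument meet with \emph{exactly} these eleven failures and no more: one must verify that the positional constraints on merging never conspire to block every route onto a target once $n$ is large enough, and then work through the remaining finite range carefully enough that nothing past the listed eleven slips through. This bookkeeping, rather than any single idea, is what the excerpt means by ``details that are not enlightening,'' and it is where the work of \cite{bgs} concentrates.
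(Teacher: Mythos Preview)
Your overall plan matches what the paper sketches: reformulate via \emph{targets} (numbers $n'$ with $S(n')\le2$), use the mod-$2$ invariance to cut the odd targets down to powers of $3$, handle large digit sum by a merging argument with fine-tuning in steps of $+2$ via $1{*}$ merges, and leave a finite residue to the computer. All of that is correct and is essentially the approach of \cite{bgs} as described here.

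The gap is in your case split. You split on the size of $n$ and write ``if $n$ lies above the threshold and $S(n)=m$ is not a target, then $m$ is large and $n$ has many digits.'' That implication is false: for any threshold $T$ there are $n>T$ with $S(n)=5$ (e.g.\ $n=2\cdot3^N+2\cdot3^M+1$ with $N$ large), and such $n$ have only three nonzero digits, so the ``wide latitude in merging'' argument does not apply as stated. The paper (and \cite{bgs}) split on the \emph{digit sum}: theory for $m$ above a bound, computer for $m$ below. That still leaves, for each small non-target $m$, infinitely many $n$ with $S(n)=m$; so the ``computer'' step itself needs a reduction showing that only finitely many digit patterns matter for each such $m$. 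This reduction, not just a brute search over $n$ up to $393901$, is part of what is deferred to \cite{bgs}.
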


This was proved using a computer to exhaustively handle the case when the sum of digits is small and using theory to handle the case when the sum of digits is large.  Finding a simpler, non-computer proof, for the base $3$ case would be an interesting problem.

\end{document}